\newtheorem{thm}{Theorem}[section]
\theoremstyle{definition}
\newtheorem{definition}[thm]{Definition}
\theoremstyle{remark}
\numberwithin{equation}{section}
\begin{document}

\title{A note on pure braids and link concordance}

\author{Miriam Kuzbary}
\address{School of Mathematics, Georgia Institute of Technology, Atlanta, GA 30332}
\email{kuzbary@gatech.edu}
\urladdr{http://people.math.gatech.edu/~mkuzbary3/}

\begin{abstract}

The knot concordance group can be contextualized as organizing problems about 3- and 4-dimensional spaces and the relationships between them. Every 3-manifold is surgery on some link, not necessarily a knot, and thus it is natural to ask about such a group for links. In 1988, Le Dimet constructed the string link concordance groups and in 1998, Habegger and Lin precisely characterized these groups as quotients of the link concordance sets using a group action. Notably, the knot concordance group is abelian while, for each $n$, the string link concordance group on $n$ strands is non-abelian as it contains the pure braid group on $n$ strands as a subgroup. In this work, we prove that even the quotient of each string link concordance group by its pure braid subgroup is non-abelian.

\end{abstract}

 \maketitle


\section{Introduction}

A knot is a smooth embedding of $S^1$ into $S^3$ and an $n$-component link is a smooth embedding of $n$ disjoint copies of $S^1$ into $S^3$. The set of knots modulo a $4$-dimensional equivalence relation called concordance forms the celebrated knot concordance group $\mathcal{C}$ using the operation connected sum, while the set of $n$-component links modulo concordance does not have a well-defined connected sum (even when the links are ordered and oriented) \cite{foxmilnor}. As a result, multiple notions of link concordance group have arisen in the literature with somewhat different structural properties  \cite{hosokawa} \cite{ledimet} \cite{donaldowens}. 

We focus on the $n$-strand string link concordance group $\mathcal{C}(n)$ of Le Dimet as it is the only notion of link concordance group which is non-abelian. This is in stark contrast to the original knot concordance group which is abelian, and thus is perhaps a further indicator of the complexity of the link concordance story. While each link has an infinite number of distinct string link representatives, the correspondence between links and their string link representatives was characterized by Habegger and Lin using the action of $\mathcal{C}(2n)$ on $\mathcal{C}(n)$ \cite{habeggerlinconc}. In particular, a string link is trivial in this group exactly when the link in $S^3$ it represents is concordant to the unlink. 

The string link concordance group in the $n > 2$ case is known to be non-abelian as it contains $\mathcal{P}(n)$, the pure braid group on $n$ strands, as a proper subgroup \cite{ledimet} (for $n=2$ see \cite{decampos}). Therefore, it is not clear whether $\mathcal{C}(n)$ being non-abelian is merely an inherited property from its pure braid subgroup. This question can be viewed as in some sense complementary to the question of classifying links up to link homotopy answered in \cite{habeggerlinhtpy}, as every string link is link homotopic to a pure braid. Understanding how $\mathcal{P}(n)$ sits inside $\mathcal{C}(n)$ is made more difficult by the fact that $\mathcal{P}(n)$ is not a normal subgroup for $n>2$ \cite{klw}. Despite this difficulty, we have proven the following theorem.

\begin{restatable}{thm}{nonabelian}\label{nonabelian}
$\frac{\mathcal{C}(n)}{Ncl(\mathcal{P}(n))}$ is non-abelian for all $n$.
\end{restatable}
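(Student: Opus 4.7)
My plan is to construct a surjective group homomorphism $\Phi : \mathcal{C}(n) \twoheadrightarrow H$ onto a non-abelian group $H$ with $\Phi(\mathcal{P}(n)) = 1$. Since $\ker \Phi$ is normal and contains $\mathcal{P}(n)$, it then contains $Ncl(\mathcal{P}(n))$, so $\Phi$ descends to a surjection $\mathcal{C}(n)/Ncl(\mathcal{P}(n)) \twoheadrightarrow H$, forcing the quotient to be non-abelian.

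A natural first simplification is to reduce to a minimal base case in $n$. The inclusion $\mathcal{C}(n-1) \hookrightarrow \mathcal{C}(n)$ obtained by appending a trivial strand and the deletion retraction $\mathcal{C}(n) \twoheadrightarrow \mathcal{C}(n-1)$ both carry pure braids to pure braids, and their composition is the identity on $\mathcal{C}(n-1)$. They therefore descend to a retraction $\mathcal{C}(n-1)/Ncl(\mathcal{P}(n-1)) \hookrightarrow \mathcal{C}(n)/Ncl(\mathcal{P}(n))$, so non-abelianness of the quotient at any fixed $n_0$ automatically propagates to all larger $n$. This reduces the theorem to producing $\Phi$ at the smallest relevant $n$, presumably $n = 3$, since $\mathcal{P}(2) = \mathbb{Z}$ is abelian and the excerpt's own justification that $\mathcal{C}(n)$ is non-abelian via $\mathcal{P}(n)$ only begins at $n = 3$.

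For the base case, the geometric input is that every component of a pure braid is an unknotted arc and that the closure of a pure braid is an unlink. This suggests augmenting the component-closure homomorphism $\mathcal{C}(n) \to \mathcal{C}^n$ (which kills $\mathcal{P}(n)$ but has abelian image) with relative data --- for instance a satellite-type or partial-closure invariant recording how a chosen component sits inside the complement of the others, or a link concordance invariant extracted from the Habegger--Lin correspondence --- designed to vanish automatically on pure braids because the companion data is trivial in that case.

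The principal obstacle is ensuring that such a refined invariant is simultaneously (a) a well-defined group homomorphism on all of $\mathcal{C}(n)$, (b) zero on all of $\mathcal{P}(n)$ rather than only on an obvious generating set, and (c) takes values in a non-abelian group with non-commuting image. Because pure braids already realize essentially all of the classical linking-type invariants --- linking numbers as well as higher Milnor invariants --- the refined invariant must capture information strictly orthogonal to pure braid data. I expect the technical heart of the proof to be the explicit construction of a pair of string links whose commutator is detected by this invariant, together with the verification that the invariant annihilates all of $\mathcal{P}(n)$ and lands in a non-abelian target.
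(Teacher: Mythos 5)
Your skeleton --- reduce to a small base case by stabilization, then find something that vanishes on $Ncl(\mathcal{P}(n))$ but not on some commutator --- has the right shape, and the stabilization step (trivial-strand inclusion split by strand deletion, both preserving pure braids) is correct and matches what the paper does implicitly. But the proposal stops exactly where the theorem's content begins: no invariant is constructed, no commutator is exhibited, and you explicitly defer ``the technical heart of the proof.'' As it stands this is a strategy memo, not a proof. Worse, the one concrete heuristic you do commit to points in the wrong direction. It is not true that pure braids realize essentially all higher Milnor invariants; they realize the link-homotopy ones (those with distinct indices), but the Milnor invariants with repeated indices are another matter. The paper's key observation is precisely that the Sato--Levine invariant $\bar\mu(iijj)$ vanishes on every pure braid, because it depends only on the two-component sublink formed by strands $i$ and $j$, and a two-strand pure braid is just a full twist. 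So the ``classical linking-type invariant'' you dismiss is exactly the tool that works.

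Two further corrections. First, the base case is $n=2$, not $n=3$: the paper exhibits a commutator $C$ of a Hopf-link string link with a Whitehead-link string link satisfying $\bar\mu_{\widehat{C}}(1122)=1$, computed via Cochran's surface-system algorithm, so $\mathcal{C}(2)/Ncl(\mathcal{P}(2))$ is already non-abelian even though $\mathcal{P}(2)\cong\mathbb{Z}$ is abelian. Second, your framing via a globally defined homomorphism $\Phi:\mathcal{C}(n)\twoheadrightarrow H$ with non-abelian image is stronger than what is available: $\bar\mu(iijj)$ is only well defined and additive on string links with vanishing pairwise linking numbers. Consequently, vanishing on all of $Ncl(\mathcal{P}(n))$ is \emph{not} automatic from vanishing on $\mathcal{P}(n)$ --- your $\ker\Phi \supseteq Ncl(\mathcal{P}(n))$ shortcut is unavailable --- and the paper must argue it by hand: for a conjugate $s_l p_l s_l^{-1}$, the relevant two-component sublink is $L P' L^{-1}$ with $P'$ a full twist, and one slides $L$ through the twist region to get $P' L L^{-1}$, which is concordant to $P'$; additivity of the first non-vanishing Milnor invariants (Cochran, Theorem 8.12) then handles products of such conjugates. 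Since every commutator has vanishing linking numbers, the nonvanishing of $\bar\mu(1122)$ on $C$ shows $C\notin Ncl(\mathcal{P}(n))$. These are the steps your proposal would need to supply.
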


It is natural to further ask about the structure of the quotient group $\frac{\mathcal{C}(n)}{Ncl(\mathcal{P}(n))}$ and we have made the following observation. 

\begin{restatable}{prop}{knotconcsubgroup}
$\mathcal{C}(n)/Ncl(\mathcal{P}(n))$ has a central subgroup isomorphic to $ \mathop{\oplus}\limits^n \ \mathcal{C}$. 
\end{restatable}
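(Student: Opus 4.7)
The plan is to exhibit a natural homomorphism $\iota\colon \bigoplus^n \mathcal{C} \to \mathcal{C}(n)/Ncl(\mathcal{P}(n))$ with central image and prove it is injective. Define $\iota$ by sending $(K_1, \ldots, K_n)$ to the class of the split string link $T(K_1, \ldots, K_n)$ obtained from the trivial string link by tying a local copy of $K_i$ into strand $i$ inside a small ball, with the balls pairwise disjoint. Concordant knots produce concordant split string links, and stacking $T(K_1, \ldots, K_n)$ above $T(K_1', \ldots, K_n')$ is isotopic (after sliding the balls together along each strand) to $T(K_1 \# K_1', \ldots, K_n \# K_n')$, so $\iota$ is a well-defined homomorphism.

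For injectivity I would construct a retraction $\Phi\colon \mathcal{C}(n) \to \bigoplus^n \mathcal{C}$ sending $[L]$ to the tuple of knot concordance classes of the components of the closure $\widehat{L}$. The map $\Phi$ is a homomorphism because the $i$-th component of $\widehat{L_1 \cdot L_2}$ is isotopic to the connected sum of the $i$-th components of $\widehat{L_1}$ and $\widehat{L_2}$, realized by splitting the closing arc at the horizontal slice separating the two factors. Each strand of a pure braid is isotopic rel endpoints to a straight vertical arc, so the closure of any pure braid has every component unknotted; hence $\mathcal{P}(n) \subseteq \ker \Phi$, and since $\ker \Phi$ is normal, $Ncl(\mathcal{P}(n)) \subseteq \ker \Phi$. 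Thus $\Phi$ factors through the quotient to $\bar{\Phi}\colon \mathcal{C}(n)/Ncl(\mathcal{P}(n)) \to \bigoplus^n \mathcal{C}$, and by construction $\bar{\Phi} \circ \iota = \mathrm{id}$, which proves $\iota$ is injective.

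For centrality I would argue that $\iota(K_1, \ldots, K_n)$ is actually central in $\mathcal{C}(n)$ itself, so a fortiori in the quotient. For any $L \in \mathcal{C}(n)$ the string link $L \cdot T(K_1, \ldots, K_n) \cdot L^{-1}$ is concordant to $T(K_1, \ldots, K_n)$: start from a concordance realizing $L \cdot L^{-1} \sim \mathrm{id}$, consisting of disjoint embedded rectangles $\Sigma_1, \ldots, \Sigma_n$ in $D^2 \times I \times [0,1]$, and enhance each $\Sigma_i$ by letting the local knotted ball on strand $i$ travel through a tubular neighborhood of $\Sigma_i$ from level $0$ to level $1$. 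The tubular neighborhoods of the $\Sigma_i$ can be chosen pairwise disjoint, so the enhanced surfaces still form a valid concordance of string links; this gives $[L, \iota(K_1, \ldots, K_n)] = 0$ in $\mathcal{C}(n)$.

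The main obstacle is making the centrality step fully rigorous: the enhancements of the surfaces $\Sigma_i$ must be constructed so they remain pairwise disjoint and also coherent at both levels, which is controlled by the standard tubular neighborhood theorem applied to the disjoint surfaces $\Sigma_i$ in the $4$-manifold $D^2 \times I \times [0,1]$. With this in hand, the rest of the proof is essentially formal.
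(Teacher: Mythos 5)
Your proof is correct, and the underlying construction---the subgroup of split string links with a local knot tied into each strand---is the same as the paper's; the differences are in how the two halves are verified. For injectivity, the paper only remarks that every component of an element of $Ncl(\mathcal{P}(n))$ is slice (since pure braids have unknotted strands); your explicit retraction $\Phi$, recording the knot concordance classes of the components of the closure, packages that same observation as a homomorphism splitting $\iota$, which simultaneously shows that the $K_i$ generate a genuine copy of $\bigoplus^n \mathcal{C}$ (rather than a proper quotient of it) and that this copy meets $Ncl(\mathcal{P}(n))$ trivially---arguably a cleaner and more complete treatment than the paper's. For centrality, the paper uses a purely $3$-dimensional isotopy: a local knot sits in a small ball meeting only strand $i$, so it can be slid along that strand past all of $\sigma$, and $K_i$ commutes with everything essentially on the nose. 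Your $4$-dimensional argument---carrying the local knots through a concordance from $L \cdot L^{-1}$ to the identity inside disjoint tubular neighborhoods of the annuli $\Sigma_i$---is also valid (it is the standard ``connected sum of a concordance with $K_i \times [0,1]$'' construction), but it is more machinery than needed, and the sliding argument sidesteps exactly the technical point you flag as the main obstacle.
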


Notice that Theorem \ref{nonabelian} leads to questions about how close this quotient is to being abelian. More precisely, it is natural to ask if this quotient solvable or even nilpotent?

\begin{restatable}{conj}{solvable}
$\mathcal{C}(n)$ is not solvable for any $n$ and neither is $\frac{\mathcal{C}(n)}{Ncl(\mathcal{P}(n))}$. 
\end{restatable}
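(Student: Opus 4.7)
My overall approach is to prove the conjecture by exhibiting, for every $k$, explicit elements of $\mathcal{C}(n)$ (respectively of $\mathcal{C}(n)/Ncl(\mathcal{P}(n))$) lying in the $k$-th derived subgroup but not in the $(k+1)$-st, so that the derived series never terminates. The two main inputs would be the Falk--Randell decomposition of the pure braid group and the higher-order $L^2$-signature machinery of Cochran--Orr--Teichner, refined by Cochran--Harvey--Leidy, applied to iterated Bing doubles or satellites of carefully chosen string links.

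For $\mathcal{C}(n)$ with $n \geq 3$, non-solvability is essentially immediate: by Falk--Randell, $\mathcal{P}(n)$ is an iterated semidirect product of finitely generated free groups and contains a non-abelian free subgroup when $n \geq 3$, hence is not solvable; since $\mathcal{P}(n) \hookrightarrow \mathcal{C}(n)$, neither is $\mathcal{C}(n)$. The remaining case is $n \leq 2$, where the pure braid group is abelian and non-solvability must come from the underlying knot concordance group. Here I would produce, for each $k$, a knot built by $k$ iterations of a Cochran--Harvey--Leidy doubling operator applied to a seed with non-zero integral of Levine--Tristram signatures over a metabolizer, and use a von Neumann $\rho$-invariant from a height-$k$ tower of PTFA representations to obstruct $k$-solvability; transporting the resulting knot into $\mathcal{C}(n)$ by adding trivial parallel strands then yields the desired derived-series depth.

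To pass to the quotient $\mathcal{C}(n)/Ncl(\mathcal{P}(n))$, one must refine these invariants so that they vanish on $Ncl(\mathcal{P}(n))$. Pure braids have the special feature that their complements carry a canonical meridian system permuted by the braid, and any invariant built from the derived or lower central quotients of $\pi_1$ relative to this meridian data should be insensitive to pure braids and stable under conjugation, hence vanish on the whole normal closure. My plan is therefore to recast the COT--CHL $\rho$-invariants in this relative framework and then verify that the iterated doubles above remain nontrivial at every derived level of the quotient.

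The main obstacle is the sharpness of the derived-series detection. Placing constructed elements inside the $k$-th derived subgroup is geometric and tractable; proving they lie outside the $(k+1)$-st requires an invariant nontrivial on commutators of depth exactly $k$ but vanishing deeper, which in the knot-concordance setting is precisely the content of the deepest CHL filtration results and is already delicate. Compounding this, $Ncl(\mathcal{P}(n))$ is poorly understood abstractly---even Theorem~\ref{nonabelian} handles only the first derived quotient---so ruling out that an iterated double accidentally coincides with a product of conjugates of pure braids will require an invariant whose vanishing on $\mathcal{P}(n)$ is structural rather than coincidental.
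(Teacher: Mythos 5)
First, a point of context: the paper gives no proof of this statement --- it is explicitly labeled a conjecture and left open --- so there is no argument of the author's to compare yours against. Your text is a research program rather than a proof, and by your own admission the key step (showing the constructed elements lie outside the $(k+1)$-st derived subgroup) is not carried out. One piece of it is correct and worth isolating: for $n \geq 3$, $\mathcal{P}(n)$ contains a non-abelian free subgroup (already $\mathcal{P}(3) \cong F_2 \times \mathbb{Z}$), subgroups of solvable groups are solvable, and $\mathcal{P}(n)$ embeds in $\mathcal{C}(n)$, so $\mathcal{C}(n)$ is not solvable for $n \geq 3$. That genuinely settles the first clause of the conjecture for $n \geq 3$, but it says nothing about the quotient, where precisely that free subgroup is killed, and nothing about $n=2$. (For $n=1$ the first clause is false: $\mathcal{C}(1)=\mathcal{C}$ is abelian, hence solvable.)

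The central flaw in the rest is a conflation of two unrelated uses of the word ``solvable'': the Cochran--Orr--Teichner $(n)$-solvable filtration is a $4$-dimensional condition on individual knots and links (it filters the \emph{abelian} group $\mathcal{C}$ in the knot case), whereas the conjecture concerns the derived series of the group $\mathcal{C}(n)$. A knot obstructed from the COT/CHL filtration by $L^2$ $\rho$-invariants yields, after adding trivial parallel strands, a \emph{central} element of $\mathcal{C}(n)$ --- this is exactly the content of the paper's Proposition --- and central elements can never witness derived-series depth, since every commutator involving them is trivial. To show $\mathcal{C}(n)^{(k)} \neq \mathcal{C}(n)^{(k+1)}$ one needs invariants that are additive on the relevant subgroup (so that they descend to homomorphisms killing the next derived subgroup) and nonzero on explicit $k$-fold iterated commutators of genuinely linked string links; given the paper's $k=1$ argument via the Sato--Levine invariant $\mu(iijj)$, the natural candidates are longer Milnor invariants, not satellite-knot $\rho$-invariants. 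Two further gaps: an invariant that vanishes on $\mathcal{P}(n)$ and is conjugation-invariant need not vanish on $Ncl(\mathcal{P}(n))$ without additivity on products (the paper must invoke Theorem 8.12 of Cochran's memoir for exactly this step), and no argument is offered that the proposed elements survive in the quotient by $Ncl(\mathcal{P}(n))$. As it stands the proposal does not establish the conjecture, which remains open.
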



%


There are many natural questions arising from these results, including:

\begin{itemize}
\item Does $\frac{\mathcal{C}(n)}{Ncl(\mathcal{P}(n))}$ contain elements of finite order greater than two?
\item What is the abelianization of $\frac{\mathcal{C}(n)}{Ncl(\mathcal{P}(n))}$?
\item How do the images of the $n$-solvable filtration and $n$-bipolar filtration behave in this quotient?
\item What can be said about the structure of $Ncl(\mathcal{P}(n)$ itself and which properties of $\mathcal{P}(n)$ are preserved in this closure?
\end{itemize}

\section{Proofs and tools used in proofs}

In this section, we carefully define $n$-component string links and the equivalence relation on them required to form a group. We then outline the main invariants used obstruct the relevant quotient being abelian and present proofs of the results.

\begin{definition}[$n$-component string link] \label{def:stringlink}
Let $D$ be the unit disk, $I$ the unit interval, and $\{p_1, p_2, ..., p_k\}$ be $n$ points in the interior of $D$. An \textit{$n$-component (pure) string link} is a smooth proper embedding $\sigma : \bigsqcup_m I \rightarrow D \times I$ such that
\begin{align*}
\sigma|_{I_i(0)} =& \{p_i\}\times \{0\} \\
\sigma|_{I_i(1)} =& \{p_i\}\times \{1\} 
\end{align*}
\end{definition}

As is typical in the study of knots and links, we will often abuse notation and use $\sigma$ to refer to both the map itself and to its image embedded in $D^2 \times I$. Notice that a $n$-component string link is an $n$-strand pure braid when each slice $\sigma_{D^2 \times \{t\}}$ is the $n$-punctured disk. Just as we can take the closure of a braid, we can take the closure of a string link  and get a link in $S^3$. Note that not every link is the closure of a pure braid, however, every link is the closure of a pure string link in the following way.  First, introduce an embedded disk intersecting each component of $L$ once positively. Then, cut open $S^3$ along this disk to get a collection of $n$ knotted arcs in $D^2 \times I$. In order to multiply string links $\sigma_1$ and $\sigma_2$, glue $D^2 \times \{1\}  \subset \sigma_1$ to $D^2 \times \{0\}  \subset \sigma_2$ by the identity map on $D^2$. 

Geometrically, this multiplication is exactly what one would obtain from taking two closed $n$-component links $L_1$ and $L_2$ in $S^3$, choosing basing disks $\Delta_1$ and $\Delta_2$ for each link (which would give $\sigma_1$ and $\sigma_2$ if one were to cut open along them), and then band summing the $i^{th}$ strand of $L_1$ with the $i^{th}$ strand of $L_2$ exactly at the points $(L_1)_i \cap \Delta_1$ and $(L_1)_i \cap \Delta_2$. Note that the set of $n$ component pure string links merely forms a monoid and not a group, and therefore to define inverses we consider the following notion. 


\begin{definition}[String link concordance]
Two $n$-component string links $\sigma_1$ and $\sigma_2$ are \textit{concordant} if there is a smooth embedding $H: \bigsqcup_m (I \times I) \rightarrow B^3 \times I$ which is transverse to the boundary such that
\begin{align*}
H|_{(\bigsqcup_m I \times \{0\})} =& \sigma_1 \\
H|_{(\bigsqcup_m I \times \{1\})} =& \sigma_2 \\
H|_{(\bigsqcup_m \partial I \times I)} =& j_0 \times id_I 
\end{align*}
 with $j_0: \bigsqcup_m \partial I \rightarrow S^2$.
\end{definition}

Now, we can see the inverse of a string link $\sigma$ is simply $\sigma$ reflected across $D^2 \times \{1\}$. More precisely, if $r: I \times I$ is the reflection map $r(t)=1-t$, $R_t = \text{id}_{D^2} \times r$, and $R_s = \bigsqcup_{i=1}^{n} r$, then $\sigma^{-1} = R_s \circ \sigma \circ R_t$. It is clear through taking string link closures that concordant string links close to form concordant links in $S^3$.


\subsection{Milnor's Invariants}

The main result in this work was proven using Milnor's invariants. These integer-valued concordance invariants first defined by Milnor in 1957 can be interpreted as higher order linking numbers and computed in many different ways. Roughly, these invariants detect how deep the longitudes of an $n$-component link $L \in G=\pi_1(S^3\setminus \nu(L),*)$ lie in the lower central series of $G_q$ the link group of the link group $G$. The mechanism for doing so involves comparing the nilpotent quotients $G/G_q$ to those of the free group $F$ on $n$ letters and using combinatorial group theoretic techniques to analyze the longitudes in these quotients. In a sense, we can view this process as taking the ``Taylor expansion" in non-commuting variables of each longitude using Fox Calculus; the Milnor's invariants are exactly the coefficients of the resulting polynomial modulo a subset of lower order coefficients. It is certainly not obvious that such things would be concordance invariants. It is a deep result that the entire nilpotent quotients $G/G_q$ themselves are link concordance invariants and that Milnor's invariants extract concordance data from these quotients. 

These invariants are difficult to compute; however, due to Cochran we have an algorithm for computing the non-vanishing Milnor's invariants of lowest weight (which should be thought of as the lowest degree coefficients of the aforementioned polynomials) using surface systems as well as many useful theorems involving these invariants of lowest weight. The main idea of Cochran's construction relies on the celebrated theorem of Turaev and Porter which relates Milnor's invariants to Massey products on the link exterior. Cochran's process dualizes the Massey product defining system machinery to use intersections of surfaces instead of cup products of cochains. For details of the procedure, see \cite{cochranmemoir}.

%

\subsection{Proofs}

\nonabelian*
\begin{proof}
This proof relies on using the lowest weight non-vanishing Milnor invariants of a pure braid. We will first show by contradiction that if a pure braid has all pairwise linking numbers 0, the lowest weight non-vanishing Milnor's invariant of the closure of any pure braid must have at least three distinct indices. 

Let $P$ be an $n$-stranded pure braid with pairwise linking 0 whose lowest-weight non-vanishing Milnor's invariant $\mu_{\widehat{P}}(I)$ only involves two indices, i and j. Now, notice that since the Milnor's invariants of $\widehat{P}$ involving exactly two indices only depend on the 2-component sub-braids of $P$, the lowest weight non-vanishing Milnor's invariant of the sublink $\widehat{P^{\prime}}$ corresponding to the closure of the $i^{th}$ and $j^{th}$ strand of $P$ must be $\mu_{\widehat{P^{\prime}}}(I)=\mu_{\widehat{P}}(I)$. This is our contradiction, as $P^{\prime}$ is a two-stranded pure braid with vanishing linking number and therefore must be the trivial braid as linking number gives an isomorphism from $\mathcal{P}(2)$ to the integers.

%
%
%
%
%
%
%

Now, let $b \in Ncl(\mathcal{P}(n))$ with all pairwise linking numbers zero whose lowest-weight non-vanishing Milnor's invariant $\mu_b(I)$ has multi-index $I$ only involving two indices, i and j. We see that $b = \Pi_{l=1}^m s_lp_ls_l^{-1}$ where $p_l \in \mathcal{P}(n)$ and $s_l \in \mathcal{C}(n)$ for all $l$ by definition. Therefore, by Theorem 8.12 in \cite{cochranmemoir} we see that  $\mu_{\Pi_{l=1}^m s_lp_ls_l^{-1}}(I)=\Sigma_{l=1}^m \mu_{s_lp_ls_l^{-1}}(I)$.

Again, $\mu_{s_lp_ls_l^{-1}}(I)$ will depend only on the $2$-component sublinks of each string link $s_lp_ls_l^{-1}$ made up of the $i^{th}$ and $j^{th}$ components of $s_lp_ls_l^{-1}$. Each of these sublinks will be a product of the $i^{th}$ and $j^{th}$ components of $s_l$, $p_l$, and $s_l^{-1}$. We will show what happens for a specific $l$. Call these components $L$, $P^{\prime}$, and $-L$.

\begin{figure}[H]
\centering
\begin{subfigure}{.45\textwidth}
\centering
\includegraphics[scale=.15]{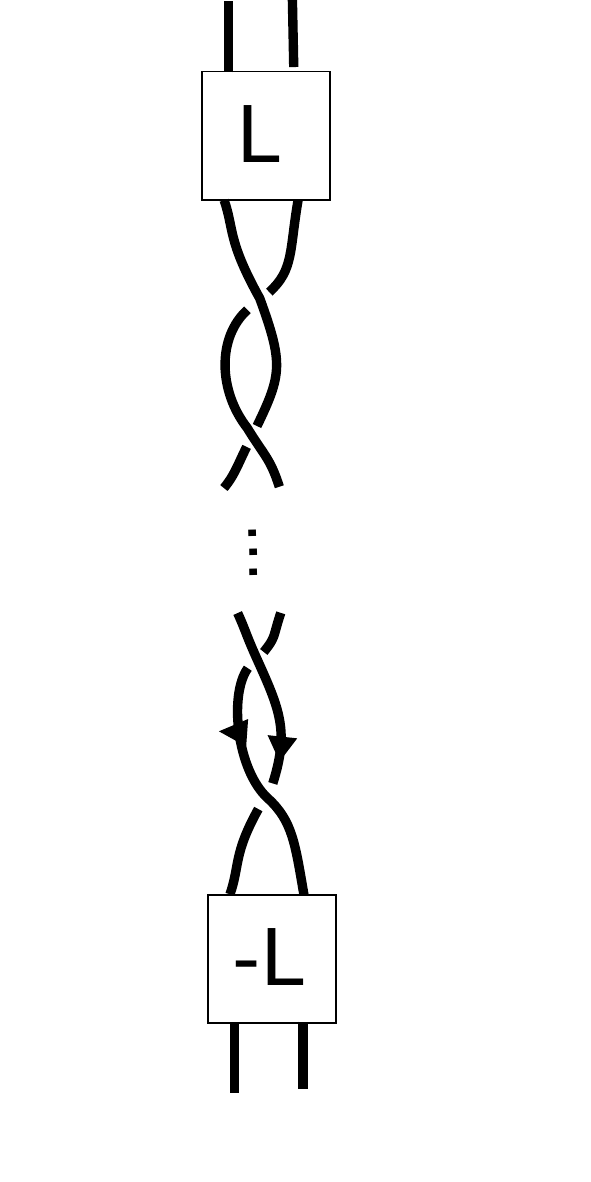}
\end{subfigure}
\begin{subfigure}{.45\textwidth}
\centering
\includegraphics[scale=.15]{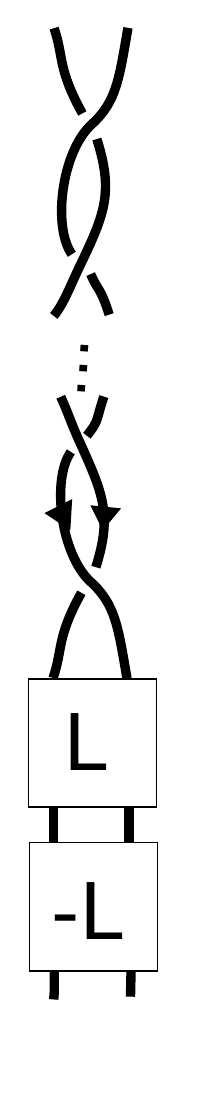}
\end{subfigure}
\caption{Pushing the string link $L$ along the full twists $P^{\prime}$.}
\label{fig:flype}
\end{figure}

As is clear in Figure \ref{fig:flype}, $P^{\prime}$ is a product of full twists and thus we can slide $L$ along the twisting region to obtain the word $PLL^{-1}$. We see this string link is concordant to $P$ and thus $\mu_{s_lp_ls_l^{-1}}(I)=\mu_{P}(I)=0$. Finally, this tells us $\mu_b(I)=0$.

Hence to prove the theorem it will suffice to show a commutator in $\mathcal{C}(2)$ has first non-vanishing Milnor invariant corresponding to a multi-index only involving two indices. Note that any commutator of string links has pairwise linking number $0$. Such a commutator appears in Proposition 4.9 of \cite{meilhanyasuhara}. This commutator of string links A and B has $\mu_{[A,B]}(111212221) \neq 0$ and no non-vanishing Milnor invariants of lower weight as calculated in the proof of this proposition. Moreover, a simple computation of the Gassner representation as in Section 7 of \cite{klw} shows that neither $A$ nor $B$ lie in $\mathcal{P}(2)$, therefore the existence of this example proves the theorem for $n=2$. Note that this provides an alternate proof of the result of \cite{decampos}, avoiding theta graphs entirely. 

Now, consider the link $\sigma \subset \mathcal{C}(n)$ whose first two strands are isotopic to $C$ and whose remaining $n-2$ strands are trivial. This link is also a commutator, and we immediately see $\mu_{\widehat{\sigma}}(111212221) \neq 0$ therefore the result follows for all $n$.

\end{proof}

\knotconcsubgroup*
\begin{proof}
The result follows from a series of simple observations. First, note that any string link of the form $K_i$ which is split with $i^{th}$ strand isotopic to a knot $K \subset S^3$ cut open at a point and all other strands trivial will commute with any string link $\sigma$ by simply sliding $K$ along the $i^{th}$ strand of $[K_i, \sigma]$. 
Furthermore, note that $K_i \cdot J_l$ is $(K \# J)_i$ if $i=l$ and, if $i \neq l$, it is the split link with $i^{th}$ component isotopic to $K$ split open at a point and $l^{th}$ component isotopic to $J$ split open at a point. Therefore, the set $\{K_i \mid K \in \mathcal{C}, i \in \{1, ..., n\} \}$ generates $ \mathop{\oplus}\limits^n \ \mathcal{C}$. Furthermore, no element in this set is in $Ncl(\mathcal{P}(n))$ as each component of an element of $Ncl(\mathcal{P}(n))$ must have slice components (this is clear as pure braids have unknotted components).

\end{proof}

\subsection{Acknowledgments} The author was supported by an NSF Graduate Research Fellowship under Grant No. 1450681 as well as an AAUW American Fellowships Dissertation Fellowship. Additionally, the author was partially supported by NSF grant DMS-1745583 as a postdoc. The author is deeply grateful to her advisor, Shelly Harvey, for her guidance. The author would further like to express her appreciation for Tim Cochran for giving her a roadmap of where to look to understand Milnor's invariants and telling her to physically make links using household objects to check her more unwieldy linking number computations. The author would like to thank Jennifer Hom for her mentorship and generous feedback and would also like to thank her REU student Benjamin Pagano for pointing out a computational error in an earlier version of this paper. Finally, the author would like to thank Jean-Baptiste Meilhan and Akira Yasuhara for helpful correspondence.

\bibliographystyle{alpha} 
\bibliography{purebraids} 

\end{document}